\newtheorem{thm}{Theorem}[section]
\newtheorem{theorem}[thm]{Theorem}
\newtheorem{lemma}[thm]{Lemma}
\theoremstyle{definition}
\newtheorem{definition}[thm]{Definition}
\newtheorem{example}[thm]{Example}
\theoremstyle{remark}
\newtheorem{remark}[thm]{Remark}
\numberwithin{equation}{section}
\begin{document}

\title{On the Count Probability of Many Correlated Symmetric Events}

\author{Rüdiger Kürsten}
\address{Institut für Physik, Universität Greifswald, Felix-Hausdorff-Str. 6, D-17489 Greifswald, Germany}
\email{ruediger.kuersten@uni-greifswald.de}
\date{October 29, 2019}

\begin{abstract}
We consider $N$ events that are defined on a common probability space.
Those events shell have a common probability function that is symmetric with respect to interchanging the events.
We ask for the probability distribution of the number of events that occur. 
If the probability of a single event is proportional to $1/N$ the resulting count probability is Poisson distributed in the limit of $N\rightarrow \infty$ for independent events.
In this paper we calculate the characteristic function of the limiting count probability distribution for events that are correlated up to an arbitrary but finite order.
\end{abstract}

\maketitle

\section{Introduction}

We might distribute $N$ grains of rice randomly in a room and ask for the number of grains that lie on a marked subset of the ground like e.g. a circle drawn on it.
If we assume that each grain lies on each position equally likely and furthermore that the positions of all grains are independent, the resulting number of grains in the marked area will be binomial distributed.
If we increase the amount of rice grains and the area of the room such that the ration between them remains constant, the binomial distribution will converge to a Poisson distribution in the limit $N \rightarrow \infty$ \cite{Poisson1837}.
In this paper we generalize this result for the case that the positions of the grains are not independent but still symmetric with respect to interchanges of the grains.
That means it is impossible to distinguish the different grains statistically.
We consider correlations of arbitrary but finite order. Later on we define precisely what is meant by correlation order.

Clearly, the probabilistic analysis of the problem is not restricted to rice grain but it can be applied to any large set of randomly positioned objects. The study of the distribution of large sets of points in space is known as spatial point pattern analysis \cite{Diggle83, IPSS08}.
It has applications in divers fields such as for example ecology \cite{VMGMW16}, astronomy \cite{KSRBBGMPW97} or statistics of crimes \cite{MSBST11}.
Furthermore, large sets of points appear as the positions of molecules in statistical physics.

For our purposes, the exact position of each point is not important because we only care if it lies within the marked area or not.
Hence we can consider the event that a given particle lies within the marked space.
In that way we obtain $N$ events from the positions of $N$ particles.
In statistical physics, those events are usually correlated due to interactions between the particles and also other objects are often correlated.
Formally, we consider $N$ events that are correlated and statistically indistinguishable.
In that formulation it is not important whether those events are related to positions of particles in space.

Our main result is an explicit formula for the characteristic function of the number of events that occur in the limit $N\rightarrow \infty$.
If correlations are limited to some order $l_{\text{max}}$, the characteristic function will depend on exactly $l_{\text{max}}$ parameters.
The formula was already given in \cite{KSZI19} for the particular application to spatial point distributions, however, without proof.
Ref. \cite{KSZI19} also gives an efficient Monte Carlo algorithm to sample the parameters of the distribution in the case of spatial point patterns.
Furthermore, \cite{KSZI19} investigates not only the number of particles within a marked set in space but also the number of neighbors of a randomly chosen particle.
If the neighborhood is defined by some spatial relation, e.g. if particles are considered to be neighbors if they lie within some given distance, the problem will be equivalent to the number of particles within an arbitrarily placed circle for independent homogeneously distributed particles.
However, for correlated particles the two problems are not identical, but related.
In fact the number of neighbor distribution can be obtained from our main result and depends on $l_{\text{max}}$ additional parameters, cf. \cite{KSZI19}.

The paper is organized as follows.
In Section \ref{sec:definitions} we give some basic definitions introducing for example the common probability function of $N$ events, their correlation function of order $k$, and the count probability function that gives the probability that exactly $s$ of the $N$ events occur.
Furthermore, we give some Lemma that are useful later on.
In Section \ref{sec:result} we give our main result, Theorem \ref{theorem:main}, together with its proof.
In Section \ref{sec:discussion} we conclude with a short discussion of the result.

\section{Basic Definitions and some Lemma\label{sec:definitions}}

Instead of explicitly writing the intersections, unions or complements of the considered events we describe all relevant events using indicator functions.
\begin{definition}[Indicator Function]
	Let $E$ be an event defined on a probability space $(\Omega, \Sigma, P)$. We define its \underline{indicator function} $\mathbbm{1}_E: \Omega \rightarrow \{0,1 \}$ by
	\begin{align}
		\mathbbm{1}_E(\omega) := \begin{cases} 1 \text{ if } \omega \in E \\ 0 \text{ if } \omega \notin E. \end{cases}
		\label{eq:indicator}
	\end{align}
\end{definition}

\begin{definition}[Symmetric Sequence of Events]\label{def:symmetricsequenceofevents}
	We call a finite sequence of events $E_1, \cdots, E_N$ that are defined on a common probability space \underline{symmetric}, if for each finite sequence $r_1, \cdots, r_N$, with $r_i\in \{0,1\}$ for $i \in \{1,\cdots, N\}$ it holds
	\begin{align}
		P(\mathbbm{1}_{E_1}=r_1, \cdots, \mathbbm{1}_{E_N}=r_N)=P(\mathbbm{1}_{E_1}=r_{\sigma(1)}, \cdots, \mathbbm{1}_{E_N}=r_{\sigma(N)})
	\end{align}
	for all permutations $\sigma$ of the elements $\{1, \cdots, N\}$.
\end{definition}
That means it is impossible to distinguish the events statistically.

\begin{definition}[Probability Function]\label{def:probabilityfunction}
	Given a symmetric sequence of events $E_1, \cdots, E_N$, we call the function $P_k: \{0,1\}^k \rightarrow [0,1]$ defined by
	\begin{align}
		P_{k}(r_1, r_2, \cdots, r_k):= \sum_{r_l \in \{0,1\} \text{ for }l \in \{k+1,\cdots, n\}}P(\mathbbm{1}_{E_1}=r_1, \cdots, \mathbbm{1}_{E_N}=r_N)
		\label{eq:probabilityfunction}
	\end{align}
	the \underline{probability function of order $k$}, where $1\le k \le N$ and $r_i \in \{0,1\}$.
\end{definition}

\begin{remark}[Symmetry of the Probability Function] \label{remark:symmetryoftheprobabilityfunction}
	The probability functions are also symmetric, that means
	\begin{align}
		P_k(r_1, \dots, r_k)=P_k(r_{\sigma(1)}, \dots, r_{\sigma(k)})
		\label{eq:symmetryoftheprobabilityfunction}
	\end{align}
	for all permutations $\sigma$ of the elements $\{1, \dots, k\}$ and for all $k\in \{1,\dots, N\}$, which follows immediately from the Definitions \ref{def:symmetricsequenceofevents} and \ref{def:probabilityfunction}.
\end{remark}

\begin{definition}[Correlation Function]\label{def:correlationfunction}
	Given a symmetric sequence of events $E_1, \cdots, E_N$, we define the \underline{correlation function of order one} $G_1:\{0,1\} \rightarrow [0,1]$ as $G_1:\equiv P_1$ and \underline{the correlation functions of order $k$} $G_k:\{0,1\}^k \rightarrow \mathbb{R}$ for $1<k\le N$ recursively by
	\begin{align}
		G_k(r_1, \cdots, r_k):= &P_k(r_1, \cdots, r_k) 
		\label{eq:correlationfunction}
		\\
		&- \sum_{\sigma}\sum_{l=1}^{k-1} \frac{1}{(l-1)!} \frac{1}{(k-l)!}G_{l}(r_1, r_{\sigma(2)}, \cdots, r_{\sigma(l)}) P_{k-l}(r_{\sigma(l+1)}, \cdots, r_{\sigma(k)}),
		\notag
	\end{align}
	where the sum over $\sigma$ denotes the sum over all permutations of the elements $\{2, \cdots, k\}$.
\end{definition}
The idea of a decomposition in different correlation orders is due to Ursell who essentially introduced the expansion given by Definition \ref{def:correlationfunction}, however, not properly normalized to serve as a probability \cite{Ursell27}. Mayer and Montroll used exactly the expansion of Definition \ref{def:correlationfunction} \cite{MM41}. 
\begin{remark}[Alternative Formulation of the Definition of the Correlation Functions]\label{remark:correlationfunction}
	We can rewrite Eq.~\eqref{eq:correlationfunction} of Definition \ref{def:correlationfunction} as $P_l(\dots) = G_l(\dots) + \dots$ and insert it recursively for all $P_l$ of order $l<k$ into Eq.~\eqref{eq:correlationfunction} and eventually replace $P_1$ by $G_1$. Performing such an expansion, only $P_k$ and $G$-functions remain on the right hand side of Eq.~\eqref{eq:correlationfunction}.
	It follows inductively from Eq.~\eqref{eq:correlationfunction} that the indexes of all correlation functions on the right hand side are ordered, that is for each term $G_l(r_{i_1}, \dots, r_{i_l})$ appearing in the expansion of Eq.~\eqref{eq:correlationfunction} it holds $i_1<i_2<\dots<i_l$.
	Thus, instead of Eq.~\eqref{eq:correlationfunction} we might alternatively write
	\begin{align}
		&G_k(r_1, \dots, r_k)=P_k(r_1, \dots, r_k) - \sum \{ \text{over all possible products of $G$-functions such that}
		\notag
		\\
		&\text{each of the arguments $r_1, \dots, r_k$ appears exactly once and for each $G$-function}
		\notag
		\\
		&\text{the arguments are ordered.}\}
		\label{eq:alternativedefinitioncorrelationfunction}
	\end{align}
\end{remark}

\begin{example}[Three- and Four-Event Correlation Functions]
	Two examples of the alternative formulation of Definition \ref{def:correlationfunction} given in the Remark \ref{remark:correlationfunction} are the three- and four-event correlation functions $G_3$ and $G_4$ that are defined by
	\begin{align}
	G_3(r_1, r_2, r_3)=&P_3(r_1, r_2, r_3)-G_1(r_1)G_1(r_2)G_1(r_3)-G_1(r_1)G_2(r_2, r_3)
	\notag
	\\
	&-G_1(r_2)G_2(r_1, r_3)-G_1(r_3)G_2(r_1, r_2),
		\label{eq:g3}
		\\
	G_4(r_1, r_2, r_3, r_4)=&P_4(r_1, r_2, r_3, r_4)-G_1(r_1)G_1(r_2)G_1(r_3)G_1(r_4)
	\nonumber
	\\
	&-G_2(r_1,r_2)G_1(r_3)G_1(r_4)-G_2(r_1,r_3)G_1(r_2)G_1(r_4)
	\nonumber
	\\
	&-G_2(r_1,r_4)G_1(r_2)G_1(r_3)-G_2(r_2,r_3)G_1(r_1)G_1(r_4)
	\nonumber
	\\
	&-G_2(r_2,r_4)G_1(r_1)G_1(r_3)-G_2(r_3,r_4)G_1(r_1)G_1(r_2)
	\nonumber
	\\
	&-G_2(r_1,r_2)G_2(r_3, r_4)-G_2(r_1,r_3)G_2(r_2, r_4)-G_2(r_1,r_4)G_2(r_2, r_3)
	\nonumber
	\\
	&-G_3(r_1, r_2, r_3)G_1(r_4)-G_3(r_1, r_2, r_4)G_1(r_3)
	\nonumber
	\\
	&-G_3(r_1, r_3, r_4)G_1(r_2)-G_3(r_2, r_3, r_4)G_1(r_1).
	\end{align}
\end{example}

\begin{lemma}[Symmetry of the Correlation Function]\label{lemma:symmetryofthecorrelationfunction}
	Let $E_1, \dots, E_N$ be a symmetric sequence of correlated events, then the corresponding correlation functions are symmetric, that is
	\begin{align}
		G_k(r_1, \dots, r_k) = G_k(r_{\sigma(1)}, \dots, r_{\sigma(k)})
		\label{eq:symmetryofthecorrelationfunction}
	\end{align}
	for all permutations $\sigma$ of the elements $\{1, \dots, k\}$, where $1\le k \le N$.
\end{lemma}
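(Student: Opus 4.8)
The plan is to prove the statement by strong induction on $k$, proving simultaneously a slightly stronger pair of assertions: (i) $G_k$ is symmetric, and (ii) the ``moment--cumulant'' identity
\[
  P_k(r_1,\dots,r_k)=\sum_{\pi}\ \prod_{B\in\pi}G_{|B|}\big((r_j)_{j\in B}\big),
\]
where $\pi$ ranges over all partitions of the index set $\{1,\dots,k\}$ into nonempty blocks $B$ and $(r_j)_{j\in B}$ denotes the tuple of arguments indexed by $B$. For $k=1$ both are immediate, since $G_1=P_1$ and $P_1$ has a single argument. So fix $k\ge 2$ and assume (i) and (ii) hold for all indices smaller than $k$.

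First I would rewrite the recursion of Definition~\ref{def:correlationfunction} in a more symmetric form. In the double sum defining $G_k$, fix $l$ and a subset $S\subseteq\{2,\dots,k\}$ with $|S|=l-1$; exactly $(l-1)!\,(k-l)!$ permutations $\sigma$ of $\{2,\dots,k\}$ satisfy $\{\sigma(2),\dots,\sigma(l)\}=S$, and by the inductive symmetry of $G_l$ (applicable since $l<k$) together with the symmetry of $P_{k-l}$ (Remark~\ref{remark:symmetryoftheprobabilityfunction}) each such $\sigma$ contributes the same value $G_l(r_1,(r_j)_{j\in S})\,P_{k-l}((r_j)_{j\in\{2,\dots,k\}\setminus S})$. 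Hence the factorials cancel and, writing $T=\{1\}\cup S$,
\[
  G_k(r_1,\dots,r_k)=P_k(r_1,\dots,r_k)-\sum_{T}G_{|T|}\big((r_j)_{j\in T}\big)\,P_{k-|T|}\big((r_j)_{j\notin T}\big),
\]
the sum running over all $T$ with $1\in T$ and $T\neq\{1,\dots,k\}$. This already makes $G_k$ symmetric in $r_2,\dots,r_k$, but \emph{not} yet under transpositions that move $r_1$ — and that is exactly the main obstacle, since Definition~\ref{def:correlationfunction} artificially singles out the first argument.

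To overcome it I would substitute the inductive identity (ii) for each $P_{k-|T|}$ on the right-hand side (legitimate, as $k-|T|<k$, and harmless to relabel by symmetry of $P_{k-|T|}$) and reorganize the resulting double sum: a partition $\pi$ of $\{1,\dots,k\}$ is the same datum as the choice of its block $T\ni 1$ together with a partition of the complement $\{1,\dots,k\}\setminus T$. This manipulation turns the displayed formula into $P_k=\sum_\pi\prod_{B\in\pi}G_{|B|}$, i.e.\ assertion (ii) for index $k$, with the term $T=\{1,\dots,k\}$ supplying precisely the summand $G_k$. This is the Ursell / Mayer--Montroll structure anticipated in Remark~\ref{remark:correlationfunction}. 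Finally, moving the trivial partition to the other side,
\[
  G_k(r_1,\dots,r_k)=P_k(r_1,\dots,r_k)-\sum_{\pi\neq\{\{1,\dots,k\}\}}\ \prod_{B\in\pi}G_{|B|}\big((r_j)_{j\in B}\big),
\]
and now every block $B$ appearing on the right has $|B|<k$, so every $G_{|B|}$ is symmetric by the induction hypothesis; moreover $P_k$ and the sum over nontrivial set partitions are manifestly invariant under any permutation of $\{1,\dots,k\}$, since such a permutation merely permutes the partitions among themselves and permutes arguments inside the (symmetric) blocks. Hence $G_k$ is symmetric, completing the induction. The only genuinely delicate points are the bookkeeping in the factorial cancellation and the reindexing of the partition sum; the remaining steps are routine.
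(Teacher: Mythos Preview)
Your proof is correct and follows essentially the same route as the paper's: strong induction on $k$, reducing to the partition-sum (Ursell/Mayer--Montroll) expansion of $P_k$ into products of lower-order $G$-functions, and then observing that any permutation of $\{1,\dots,k\}$ merely permutes the set partitions among themselves while shuffling arguments inside the (inductively symmetric) blocks. The only cosmetic difference is that you derive the partition expansion as a simultaneous inductive claim~(ii), whereas the paper simply invokes it via Remark~\ref{remark:correlationfunction} (Eq.~\eqref{eq:alternativedefinitioncorrelationfunction}); this makes your version a bit more self-contained but not genuinely different.
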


\begin{proof}[Proof of Lemma \ref{lemma:symmetryofthecorrelationfunction}]
	Obviously, the statement is satisfied for $k=1$ as the identity is the only permutation of one element.
	We assume that the statement is true for all $k$ that satisfy $1\le k < l$ and aim to show that it also holds for $l$.
	Considering the definition of $G_l$ given in the formulation of Eq.~\eqref{eq:alternativedefinitioncorrelationfunction} we realize that $P_l$ on the right hand side is symmetric due to Remark \ref{remark:symmetryoftheprobabilityfunction}. 
	Thus it remains to show that the remaining terms (without $P_l$) on the right hand side of Eq.~\eqref{eq:alternativedefinitioncorrelationfunction} are symmetric. 
	We notice that those terms only consist of products of correlation functions $G_k$ with $k<l$ such that we can apply the induction hypothesis to them.
	Therefore we identify the terms $G_k(r_{i_\sigma(1)},\dots, r_{i_{\sigma(k)}} )$ for all permutations $\sigma$ of the elements $\{1, \dots, k\}$, where $i_j$ are some indexes.
	
	Then, for any permutation $\sigma$ of the elements $\{1, \dots, l \}$, we can assign for each term in the sum in Eq.~\eqref{eq:alternativedefinitioncorrelationfunction} $G_{k_1}(r_{i_1^{k_1}}, \dots, r_{i_{k_1}^{k_1}}) G_{k_2}(r_{i_1^{k_2}}, \dots, r_{i_{k_2}^{k_2}}) \dots G_{k_s}(r_{i_1^{k_s}}, \dots, r_{i_{k_s}^{k_s}})$ the term $G_{k_1}(r_{\sigma(i_1^{k_1})}, \dots, r_{\sigma(i_{k_1}^{k_1})}) G_{k_2}(r_{\sigma(i_1^{k_2})}, \dots, r_{\sigma(i_{k_2}^{k_2})}) \dots G_{k_s}(r_{\sigma(i_1^{k_s})}, \dots, r_{\sigma(i_{k_s}^{k_s})})$ which is due to the above identification also a term in the sum in Eq.~\eqref{eq:alternativedefinitioncorrelationfunction}.
	Here, $i_j^k$ are some indexes.
	Hence the permutation $\sigma$ only exchanges terms in the sum in Eq.~\eqref{eq:alternativedefinitioncorrelationfunction} which does not affect the result of the sum.
\end{proof}

\begin{definition}[Correlation Coefficient]\label{def:correlationcoefficient}
	Given a symmetric sequence of events $E_1, \cdots, E_N$ with correlation function of order $k$, $G_k$, with $1\le k \le N$ we call $C_k\in \mathbb{R}$ defined by
	\begin{align}
		C_k:= N^k G_k(1, \cdots, 1)
		\label{eq:correlationcoefficient}
	\end{align}
	the \underline{correlation coefficient of order $k$} associated with the symmetric sequence of events.
\end{definition}

\begin{lemma}[Vanishing Sum over Correlation Functions] \label{lemma1}
	Given a symmetric sequence of correlated events it holds for any correlation function $G_k$ of order $k\ge 2$
	\begin{align}
		G_k(1, r_2, \dots, r_k)=-G_k(0, r_2, \dots, r_k)
		\label{eq:correlationlemma}
	\end{align}
	for all values of $r_i\in \{0,1\}$ for $i\in \{2, \dots, k\}$.
\end{lemma}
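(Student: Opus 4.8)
\emph{Proof plan.} The plan is to induct on the order $k$, working directly with the recursive Definition~\ref{def:correlationfunction}. The structural feature that makes the induction go through is that in Eq.~\eqref{eq:correlationfunction} the distinguished argument $r_1$ always appears inside one of the correlation factors $G_l(r_1,r_{\sigma(2)},\dots,r_{\sigma(l)})$ and never inside a probability factor $P_{k-l}$; hence the operation of evaluating at $r_1=1$, evaluating at $r_1=0$, and adding the two results touches only the $P_k$ term and those $G_l$ factors.

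First I would record the two elementary facts I will need. Marginalizing the last argument in Definition~\ref{def:probabilityfunction} gives $P_m(s_1,\dots,s_{m-1},0)+P_m(s_1,\dots,s_{m-1},1)=P_{m-1}(s_1,\dots,s_{m-1})$ for $m\ge 2$; combining this with the symmetry of $P_m$ from Remark~\ref{remark:symmetryoftheprobabilityfunction} (to move the first argument into the last slot) yields $P_k(1,r_2,\dots,r_k)+P_k(0,r_2,\dots,r_k)=P_{k-1}(r_2,\dots,r_k)$. The case $m=1$ of the same marginalization reads $P_1(0)+P_1(1)=1$, hence $G_1(0)+G_1(1)=1$ since $G_1\equiv P_1$. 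For the base case $k=2$, Eq.~\eqref{eq:correlationfunction} reduces to $G_2(r_1,r_2)=P_2(r_1,r_2)-G_1(r_1)G_1(r_2)$, so $G_2(1,r_2)+G_2(0,r_2)=P_1(r_2)-\bigl(G_1(0)+G_1(1)\bigr)G_1(r_2)=G_1(r_2)-G_1(r_2)=0$.

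For the inductive step I would assume the assertion for every order $j$ with $2\le j<k$ and then evaluate the right-hand side of Eq.~\eqref{eq:correlationfunction} at $r_1=1$ and at $r_1=0$ and add. The $P_k$ contribution becomes $P_{k-1}(r_2,\dots,r_k)$ by the identity above. In the double sum I would separate the term $l=1$ from the terms $l\ge 2$: for $l=1$ the factor is $G_1(r_1)$, which after adding the two evaluations turns into $G_1(0)+G_1(1)=1$, and what remains is $\sum_{\sigma}\frac{1}{(k-1)!}P_{k-1}(r_{\sigma(2)},\dots,r_{\sigma(k)})$; since $P_{k-1}$ is symmetric, each of the $(k-1)!$ permutations $\sigma$ of $\{2,\dots,k\}$ contributes the same value $P_{k-1}(r_2,\dots,r_k)$, so this collapses to exactly $P_{k-1}(r_2,\dots,r_k)$. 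For each $l$ with $2\le l\le k-1$ the factor $G_l(r_1,r_{\sigma(2)},\dots,r_{\sigma(l)})$ summed over $r_1=1$ and $r_1=0$ vanishes by the induction hypothesis, and since the rest of that term is $r_1$-independent, the whole $l\ge 2$ block drops out. Collecting everything, $G_k(1,r_2,\dots,r_k)+G_k(0,r_2,\dots,r_k)=P_{k-1}(r_2,\dots,r_k)-P_{k-1}(r_2,\dots,r_k)=0$, which is the claim.

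I expect the only delicate point to be the $l=1$ bookkeeping in the inductive step: one must verify that the prefactor $\frac{1}{(l-1)!\,(k-l)!}$ at $l=1$ equals $\frac{1}{(k-1)!}$, that there are exactly $(k-1)!$ permutations of $\{2,\dots,k\}$, and that the symmetry of $P_{k-1}$ makes all these permuted copies equal, so that the $l=1$ contribution is precisely one copy of $P_{k-1}(r_2,\dots,r_k)$ — exactly what is needed to cancel the $P_k$ contribution, with no stray combinatorial factor surviving. Everything else follows immediately from the induction hypothesis and the two preliminary identities. Note finally that the base case $k=2$ is just the special case of the inductive step in which the index range $2\le l\le k-1$ is empty, so the two parts could be merged into a single argument if desired.
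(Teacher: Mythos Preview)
Your proof is correct and follows essentially the same approach as the paper: induction on $k$ using the recursive Definition~\ref{def:correlationfunction}, where summing over $r_1\in\{0,1\}$ turns the $P_k$ term into $P_{k-1}(r_2,\dots,r_k)$, the $l=1$ term of the double sum into $-P_{k-1}(r_2,\dots,r_k)$, and annihilates the $l\ge 2$ terms by the induction hypothesis. You are simply more explicit than the paper about the $l=1$ bookkeeping (the $(k-1)!$ permutations multiplied by the $1/(k-1)!$ prefactor and the symmetry of $P_{k-1}$).
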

This result was already given by Mayer and Montroll \cite{MM41}.

\begin{proof}[Proof of Lemma \ref{lemma1}]
	By Definition \ref{def:correlationfunction} we have for the two-event correlation function $G_2(r_1, r_2) = P_2(r_1, r_2) - P_1(r_1) P_2(r_2)$. 
	Summing this equation over $r_1=1,0$ we obtain the result for $k=2$.
	For $k>2$ we obtain the result by induction immediately from the Definition \ref{def:correlationfunction} when summing over $r_1=1,0$.
	The first term on the right gives $P_{k-1}(r_2, \cdots, r_k)$ and the $l=1$-term from the sum gives $-P_{k-1}(r_2, \cdots, r_k)$.
	All other terms of the sum are zero by induction hypothesis.
\end{proof}

\begin{lemma}[Reduction of Correlation Function]\label{lemma:reduction_of_correalation_function}
	Given a symmetric sequence of events $E_1, \cdots, E_N$ with correlation coefficient of order $k$, $C_k$, with $2\le k\le N$ it holds for the corresponding correlation function
	\begin{align}
		G_k(r_1, \cdots, r_k) = N^{-k} C_k \prod_{l=1}^{^k} (-1)^{r_l+1}.
		\label{eq:propcorrelationfunction}
	\end{align}
\end{lemma}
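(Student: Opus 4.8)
The plan is to reduce the identity \eqref{eq:propcorrelationfunction} to repeated one-argument sign flips. First I would note that Definition \ref{def:correlationcoefficient} gives $N^{-k}C_k=G_k(1,\dots,1)$, so \eqref{eq:propcorrelationfunction} is equivalent to
\begin{align}
	G_k(r_1,\dots,r_k)=G_k(1,\dots,1)\prod_{l=1}^k(-1)^{r_l+1}.
	\label{eq:plan_reduction}
\end{align}
Since $(-1)^{r_l+1}$ equals $+1$ when $r_l=1$ and $-1$ when $r_l=0$, the right-hand side is simply $(-1)^m\,G_k(1,\dots,1)$, where $m:=\#\{\,l\in\{1,\dots,k\}:r_l=0\,\}$ counts the zero arguments; hence it suffices to show $G_k(r_1,\dots,r_k)=(-1)^m G_k(1,\dots,1)$.

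I would prove this by induction on $m$. For $m=0$ there is nothing to do, both sides being $G_k(1,\dots,1)$. For the inductive step, take a tuple with $m\ge 1$ and choose an index $j$ with $r_j=0$. Using the symmetry of the correlation function (Lemma \ref{lemma:symmetryofthecorrelationfunction}) I may move that argument into the first position without changing the value, i.e.\ assume $j=1$. Because $k\ge 2$, Lemma \ref{lemma1} applies and yields $G_k(0,r_2,\dots,r_k)=-G_k(1,r_2,\dots,r_k)$; the tuple $(1,r_2,\dots,r_k)$ has exactly $m-1$ zeros, so the induction hypothesis gives $G_k(1,r_2,\dots,r_k)=(-1)^{m-1}G_k(1,\dots,1)$. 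Multiplying by the extra factor $-1$ from Lemma \ref{lemma1} produces $(-1)^m G_k(1,\dots,1)$, and undoing the initial permutation (again by Lemma \ref{lemma:symmetryofthecorrelationfunction}) completes the step.

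I do not expect a real obstacle here; the only point requiring a little care is that Lemma \ref{lemma1} is stated only for the first slot, so one must invoke the symmetry Lemma \ref{lemma:symmetryofthecorrelationfunction} to bring an arbitrary zero argument there before using it — and the hypothesis $k\ge 2$ is precisely what guarantees Lemma \ref{lemma1} is available. An equivalent, slightly more computational route is to apply Lemma \ref{lemma1} once in each coordinate $l$ with $r_l=0$, turning every zero into a one at the cost of one factor $-1$ per flip; this gives the overall sign $(-1)^{k-\sum_l r_l}$, which equals $(-1)^{k+\sum_l r_l}=\prod_{l=1}^k(-1)^{r_l+1}$, in agreement with \eqref{eq:plan_reduction}.
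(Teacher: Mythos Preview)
Your proposal is correct and follows essentially the same approach as the paper: use Lemma~\ref{lemma1} to flip the first argument at the cost of a factor $-1$, invoke the symmetry Lemma~\ref{lemma:symmetryofthecorrelationfunction} to do this in any slot, and then identify $G_k(1,\dots,1)=N^{-k}C_k$ via Definition~\ref{def:correlationcoefficient}. Your write-up is simply a more explicit version (with the induction on the number of zeros spelled out) of the paper's short argument.
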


\begin{proof}[Proof of Lemma \ref{lemma:reduction_of_correalation_function}]
	By Lemma \ref{lemma1} the 'flipping' of $r_1$ in $G_k(r_1, r_2, \cdots, r_k)$ changes the value of the function by a factor of $-1$.
	Due to the symmetry given by Lemma \ref{lemma:symmetryofthecorrelationfunction} we obtain a factor of $-1$ also when flipping one of the other arguments of $G_k$.
	Thus, the claim follows directly from the Definition \ref{def:correlationcoefficient}.
\end{proof}

\begin{definition}[Correlation Free of Order $k$]
	We call a finite symmetric sequence of events $E_1, \cdots, E_N$ \underline{correlation free of order $k$} for $2\le k \le N$, if the corresponding correlation parameter is zero, that is
	\begin{align}
		C_k=0.
		\label{eq:correlation_free}
	\end{align}
\end{definition}

\begin{definition}[Correlated of Order $k$]
	We call a finite symmetric sequence of events $E_1, \cdots, E_N$ \underline{correlated of order $k$}, if it is not correlation free of order $k$, that is if
	\begin{align}
		C_k\neq 0.
		\label{eq:correlation_parameter}
	\end{align}
\end{definition}

\begin{definition}[Correlated up to Order $k$]
	We call a finite symmetric sequence of events $E_1, \cdots, E_N$ \underline{correlated up to order one}, if $C_k=0$ for all $k\ge 2$ and \underline{correlated up to order $k$} with $k\ge 2$ if $C_k\neq 0$ and $C_j=0$ for all $j>k$.
\end{definition}

\begin{definition}[Count Probability] \label{def:count_probability}
	Given a symmetric sequence of events $E_1, \cdots, E_N$, we call the function $p_N:\mathbb{N}_{0}\rightarrow [0,1]$ defined by
	\begin{align}
		p_{N}(s):= \langle \delta(s-\sum_{l=1}^{^N}\mathbbm{1}_{E_l}) \rangle,
		\label{eq:countprobability}
	\end{align}
	the \underline{count probability} of the sequence of symmetric events,
	where $\langle \cdot \rangle$ denotes the expectation value and
	\begin{align}
		\delta(k) = \begin{cases} 1 \text{ if } k=0 \\
			0 \text{ else.}\end{cases}
			\label{eq:delta}
		\end{align}
\end{definition}

\section{Main Result\label{sec:result}}

 \begin{theorem}[Poisson-like Distribution]\label{theorem:main}
	 Let $(E_1^N, E_2^N, \cdots, E_N^N)_{N \in \mathbb{N}, N \ge N_0}$ be a sequence of symmetric sequences of events such that for all $N\ge N_0$ the finite symmetric sequences of events $E^N_1, \cdots, E^N_N$ are correlated up to order $l_{\text{max}}$ with the same correlation parameters $C_1, \cdots, C_{l_{\text{max}}}$.
	 Let furthermore $p_N(s)$ be the count probability of the symmetric sequence of events $E^N_1, \cdots, E^N_N$, then
	 \begin{align}
		 \lim_{N\rightarrow \infty} p_N(s) = p_{\infty}(s), 
		 \label{eq:theorem}
	 \end{align}
	 where the characteristic function of the limiting distribution $p_{\infty}$ is given by
	 \begin{align}
		 \chi(u)= \sum_{s=0}^\infty \exp(ius)p(s) = \exp\bigg[ \sum_{l=1}^{l_{\text{max}}} \sum_{t=0}^l (-1)^{l-t} \frac{C_l}{l!} \binom{l}{t} \exp(itu) \bigg].
		 \label{eq:characteristicfunction}
	 \end{align}
 \end{theorem}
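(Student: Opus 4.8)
\emph{Proof proposal.} Write $S_N:=\sum_{l=1}^N\mathbbm{1}_{E^N_l}$ and put $z:=e^{iu}-1$. The plan is to compute the characteristic function $\chi_N(u):=\langle e^{iuS_N}\rangle$ explicitly for finite $N$, let $N\to\infty$, and then invert back to the count probabilities. Since $\mathbbm{1}_{E}^{2}=\mathbbm{1}_{E}$, one has $e^{iu\mathbbm{1}_{E}}=1+z\,\mathbbm{1}_{E}$, so $e^{iuS_N}=\prod_{l=1}^{N}(1+z\,\mathbbm{1}_{E^N_l})$; expanding the product, taking expectations, and using the symmetry of the events (each $k$-fold product of distinct indicators has expectation $P_k(1,\dots,1)$, with the convention $P_0:=1$) I obtain
\begin{align}
\chi_N(u)=\sum_{k=0}^{N}\binom{N}{k}z^{k}\,P_k(1,\dots,1).
\end{align}

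The next step is to express $P_k(1,\dots,1)$ through the correlation coefficients. Inverting the relation of Definition \ref{def:correlationfunction} as described in Remark \ref{remark:correlationfunction} (which can be made rigorous by induction on $k$ starting from \eqref{eq:correlationfunction}), $P_k(r_1,\dots,r_k)$ equals the sum over all set partitions $\pi$ of $\{1,\dots,k\}$ of the products $\prod_{B\in\pi}G_{|B|}$ evaluated at the (ordered) arguments of the blocks. Setting all arguments to $1$ and using $G_1(1)=P_1(1)=N^{-1}C_1$ together with Lemma \ref{lemma:reduction_of_correalation_function} and Definition \ref{def:correlationcoefficient}, every factor becomes $G_j(1,\dots,1)=N^{-j}C_j$, so with $B_k:=\sum_{\pi}\prod_{B\in\pi}C_{|B|}$ (sum over set partitions $\pi$ of $\{1,\dots,k\}$, $B_0:=1$) I get
\begin{align}
P_k(1,\dots,1)=N^{-k}B_k,\qquad \chi_N(u)=\sum_{k=0}^{N}\Big(\prod_{i=0}^{k-1}\big(1-\tfrac{i}{N}\big)\Big)\frac{z^{k}}{k!}\,B_k .
\end{align}

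Now I would pass to the limit $N\to\infty$ term by term. Each coefficient $\prod_{i=0}^{k-1}(1-i/N)$ lies in $[0,1]$ and tends to $1$, while the $k>N$ terms vanish. Grouping set partitions by the multiset of block sizes (a partition with $m_j$ blocks of size $j$ occurs in $k!\big/\prod_j\big((j!)^{m_j}m_j!\big)$ ways), one checks that $|B_k|\le\sum_{\pi}\prod_{B\in\pi}|C_{|B|}|$ and $\sum_{k\ge0}\frac{|z|^{k}}{k!}\sum_{\pi}\prod_{B\in\pi}|C_{|B|}|=\exp\big(\sum_{j=1}^{l_{\text{max}}}|C_j|\,|z|^{j}/j!\big)<\infty$, because $C_j=0$ for $j>l_{\text{max}}$; hence $\sum_k\frac{z^{k}}{k!}B_k$ converges absolutely and dominated convergence for series gives $\chi_N(u)\to\sum_{k\ge0}\frac{z^{k}}{k!}B_k$. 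The same bookkeeping of partitions by block-size multiset collapses this series, namely
\begin{align}
\sum_{k\ge0}\frac{z^{k}}{k!}B_k=\prod_{j\ge1}\sum_{m\ge0}\frac{1}{m!}\Big(\frac{C_j z^{j}}{j!}\Big)^{m}=\exp\!\Big(\sum_{j=1}^{l_{\text{max}}}\frac{C_j}{j!}\,z^{j}\Big),
\end{align}
and substituting $z^{l}=(e^{iu}-1)^{l}=\sum_{t=0}^{l}\binom{l}{t}(-1)^{l-t}e^{itu}$ turns the exponent into the one in \eqref{eq:characteristicfunction}.

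Finally I would transfer the convergence of characteristic functions back to the count probabilities. Each $p_N$ is a probability distribution supported on $\mathbb{N}_0$, so $\chi_N$ is $2\pi$-periodic and $p_N(s)=\frac{1}{2\pi}\int_{-\pi}^{\pi}\chi_N(u)e^{-ius}\,du$; since $|\chi_N|\le1$ and $\chi_N\to\chi$ pointwise, dominated convergence gives $p_N(s)\to p_\infty(s):=\frac{1}{2\pi}\int_{-\pi}^{\pi}\chi(u)e^{-ius}\,du$, and as $\chi$ is continuous with $\chi(0)=1$, L\'evy's continuity theorem identifies $p_\infty$ as a probability distribution on $\mathbb{N}_0$ with characteristic function $\chi$, which is \eqref{eq:theorem}--\eqref{eq:characteristicfunction}. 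The part I expect to be the main obstacle is the interchange of $\lim_{N\to\infty}$ with the sum over $k$: the inner quantity $B_k$ ranges over a super-exponentially growing family of set partitions, so the termwise limit has to be backed by the absolute-convergence bound above, and the combinatorial bookkeeping that produces it (and simultaneously yields the exponential form) is the real content of the argument; verifying the set-partition form of $P_k$ from \eqref{eq:correlationfunction} requires some care as well, and everything else is routine.
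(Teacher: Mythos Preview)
Your proof is correct and takes a genuinely different route from the paper's. The paper works directly with $p_N(s)$: it expands $P_N(r_1,\dots,r_N)$ into products of correlation functions, classifies terms by the numbers $k_{l,q}$ of $G_l$-factors having exactly $q$ zero arguments, simplifies the resulting multi-sum through a sequence of manipulations (Eqs.~\eqref{eq:pns1}--\eqref{eq:pns7}), takes $N\to\infty$ at the level of $p_N(s)$, and only afterwards computes $\chi(u)=\sum_s e^{ius}p_\infty(s)$. You instead attack the characteristic function first via the factorial-moment generating function identity $e^{iuS_N}=\prod_l(1+z\mathbbm{1}_{E_l})$, which reduces everything to the single sequence $P_k(1,\dots,1)$; the cluster expansion then becomes the classical moment--cumulant relation $P_k(1,\dots,1)=N^{-k}\sum_\pi\prod_{B\in\pi}C_{|B|}$, and the exponential formula does all the combinatorics at once. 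The $N\to\infty$ limit is then a clean dominated-convergence argument on a single series, and Fourier inversion on $[-\pi,\pi]$ transfers pointwise convergence of $\chi_N$ back to $p_N(s)\to p_\infty(s)$.

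What your approach buys is brevity and conceptual transparency: the heavy bookkeeping of the $k_{l,q}$ variables and the two delta-constraints in the paper's proof is replaced by one standard exponential-generating-function identity, and the justification of the limit is explicit (your absolute-convergence bound $\sum_k|z|^k\sum_\pi\prod_B|C_{|B|}|/k!=\exp\big(\sum_{j\le l_{\max}}|C_j||z|^j/j!\big)<\infty$ is exactly what the paper glosses over in Eq.~\eqref{eq:largeN}). What the paper's approach buys is an explicit, if cumbersome, formula for $p_\infty(s)$ itself (Eq.~\eqref{eq:pns7}) before passing to the characteristic function, which may be of independent use. The two points you flag as needing care---the set-partition form of $P_k$ from \eqref{eq:correlationfunction} and the interchange of limit and sum---are genuine but entirely manageable; the first is exactly the content of Remark~\ref{remark:correlationfunction} read in reverse, and the second is settled by your dominating series.
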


 \begin{proof}[Proof of Theorem \ref{theorem:main}]
 We start with Definition \ref{def:count_probability} which can be written as
 \begin{eqnarray}
	 p_{N}(s)= \sum_{r_i\in\{0,1\} } P_{N}(r_1, \cdots, r_N) \delta\bigg(s-\sum_{i=1}^N r_i \bigg).
	 \label{eq:definition_count_probability2}
 \end{eqnarray}
 For simplicity we first investigate $p_N(N)$.
 In that case, only the term with all $r_i=1$ in Eq.~(\ref{eq:definition_count_probability2}) contributes to the sum, as only for this term the $\delta$-function has the value one.
 Next, we replace the probability function $P_N$ by correlation functions according to Definition \ref{def:correlationfunction}. In particular we insert Eq.~(\ref{eq:alternativedefinitioncorrelationfunction}) of remark \ref{remark:correlationfunction}.
 In the resulting sum, all summands containing correlation functions of order greater than $l_{\text{max}}$ are zero because we assumed that those correlation functions are zero.
 From the remaining summands, many are equal due to the symmetry given by Lemma \ref{lemma:symmetryofthecorrelationfunction}.
 Each summand can be characterized by the numbers $k_l$ of factors of $G_l(1, 1, \cdots, 1)$ for $l=1, 2, \cdots, N$.
 The number of equal summands, that is the number of summands characterized by the same set of numbers $k_l$ is given by the combinatorial factor
 \begin{eqnarray}
	 \prod_{l=1}^N \bigg( \frac{1}{l!}\bigg)^{k_l} M^{n_l}_{l, k_l},
	 \label{eq:combinatorialfactor1}
 \end{eqnarray}
 where
 \begin{eqnarray}
	 n_l= N - \sum_{l'=1}^{l-1} k_{l'}.
	 \label{eq:combinatorialfactor2}
 \end{eqnarray}
The above sum is meant to have no terms if the upper bound is smaller than the lower.
The combinatorial factor $M^{n_l}_{l, k_l}$ denotes the number of possibilities to choose $l$ ordered $k_l$-plets from $n_l$ elements.
It can be calculated by 
\begin{eqnarray}
	M^{n_l}_{l, k_l}= \binom{n_l}{l k_l} \binom{l k_l}{k_l} \binom{l(k_l-1)}{k_l} \cdots \binom{k_l}{k_l} (k_l!)^l \frac{1}{k_l!}.
	 \label{eq:combinatorialfactor3}
\end{eqnarray}
The first factor gives the number of possibilities to choose the $l k_l$ arguments of the $l$ $k_l$-plets. The second factor gives the number of choices of the argument of the first $k_l$-plet, the next factor the number of possible choices for the second $k_l$-plet and so on.
The factor $(k_l!)^l$ gives the number of possible orders of the arguments of the $l$ $k_l$-plets and the last factor takes care of the fact that the $l$ $k_l$-plets are not ordered.
The above formula, Eq.~(\ref{eq:combinatorialfactor3}), simplifies to
\begin{eqnarray}
	M^{n_l}_{l, k_l}= \frac{n_l!}{(n_l-l k_l)!} \frac{1}{k_l!}.
	 \label{eq:combinatorialfactor4}
\end{eqnarray}
\begin{figure}
	\includegraphics[width=0.9\textwidth]{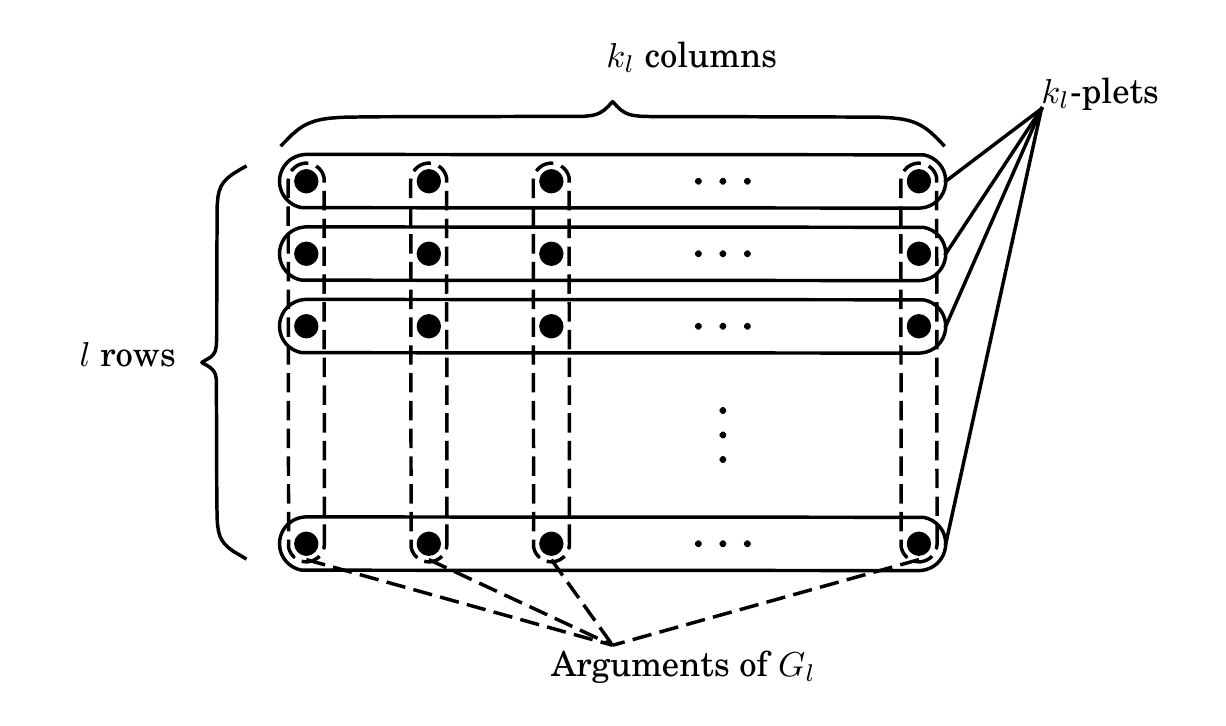}
	\caption{Illustration of the combinatorial factor $\bigg(\frac{1}{l!}\bigg)^{k_l}M^{n_l}_{l, k_l}$. \label{fig:1}}
\end{figure}
The construction of the combinatorial factors $\bigg( \frac{1}{l!}\bigg)^{k_l} M^{n_l}_{l, k_l}$ in Eq.~(\ref{eq:combinatorialfactor1}) is illustrated in Figure~\ref{fig:1}. 
There are $M^{n_l}_{l, k_l}$ possibilities to choose $l$ ordered $k_l$-plets. Than the arguments for each factor $G_l$ are chosen as the columns displayed in Figure~\ref{fig:1}.
In that way we obtain all possible permutations of the arguments for each $G_l$-factor.
However, in Eq.~(\ref{eq:alternativedefinitioncorrelationfunction}) the arguments of all correlation factors are ordered.
This is compensated by the factor of $\bigg( \frac{1}{l!}\bigg)^{k_l}$ in Eq.~(\ref{eq:combinatorialfactor1}).

Putting all together and collecting all the equal summands in $P_{N}(N)$ we obtain
\begin{eqnarray}
	p_N(N)=\prod_{l=1}^{l_{\text{max}}} \Bigg[ \sum_{k_l=0}^\infty \bigg(\frac{C_l}{N} \bigg)^{k_l} \bigg( \frac{1}{l!}\bigg)^{k_l} M^{n_l}_{l, k_l}\Bigg]\delta\bigg( N - \sum_{l=1}^{l_{\text{max}}}k_l \bigg), 
	\label{eq:pnn}
\end{eqnarray}
where we inserted $C_l/N$ for $G_l(1, 1, \cdots, 1)$ according to Definition \ref{def:correlationcoefficient}. The delta-function takes care of the fact that the total number of arguments of the correlation functions in each summand equals $N$.

For general arguments $s$ of $p_N(s)$ we go along the same lines.
We insert $P_N$ from Eq.~(\ref{eq:alternativedefinitioncorrelationfunction}) into Eq.~(\ref{eq:definition_count_probability2}) and collect equal summands from the sum over the values of the arguments $r_i$ and over the different products of correlation functions.
In analogy to Eq.~(\ref{eq:pnn}) we find
\begin{eqnarray}
	p_{N}(s) =\sum_{k_{1,0}}^{\infty} \Bigg[ \bigg(\frac{C_1}{N} \bigg)^{k_{1,0}} M^{n_{1,0}}_{1, k_{1,0}} \Bigg] \times \sum_{k_{1,1}}^{\infty} \Bigg[ \bigg(1- \frac{C_1}{N} \bigg)^{k_{1,1}} M^{n_{1,1}}_{1, k_{1,1}} \Bigg]
	\nonumber
	\\
	\times \prod_{l=2}^{l_{\text{max}}} \Bigg\{ \prod_{q=0}^{l} \Bigg[ \sum_{k_{l,q}=0}^{\infty} \bigg( (-1)^q \frac{C_l}{N^l} \bigg)^{k_{l,q}} M^{n_{l,q}}_{l, k_{l,q}} \bigg( \frac{1}{q!} \frac{1}{(l-q)!} \bigg)^{k_{l,q}}\Bigg] \Bigg\}
	\nonumber
	\\
	\times \delta\bigg(s-\sum_{l=1}^{l_{\text{max}}}\sum_{q=0}^l (l-q)k_{l,q} \bigg) \delta\bigg(N -  \sum_{l=1}^{l_{\text{max}}}\sum_{q=0}^l l k_{l,q} \bigg), 
	\label{eq:pns1}
\end{eqnarray}
where
\begin{eqnarray}
	n_{l,q} = N - \sum_{l'=1}^{l-1}\sum_{q'=0}^{l'} k_{l', q'} - \sum_{q'=0}^{q-1}k_{l, q'}.
	\label{eq:nlq}
\end{eqnarray}
Here, $k_{l, q}$ denotes the number of factors $G_l$ with $q$ arguments that have the value zero and $l-q$ arguments that have the value one.
That is $l$ denotes the correlation order and $q$ denotes the number of arguments that are zero.
The factors $\frac{C_1}{N}$ and $1-\frac{C_1}{N}$ are, according to Definition \ref{def:correlationcoefficient}, the values of the correlation function $G(1)$ and $G(0)$, respectively.
The factor $(-1)^q\frac{C_l}{N^l}$ gives the value of $G_l$ with $q$ arguments with the value zero and $l-q$ arguments with the value one according to Lemma \ref{lemma:reduction_of_correalation_function}. 
The factor $\frac{1}{q!} \frac{1}{(l-q)!}$ is the analog of $\frac{1}{l!}$ in Eq.~(\ref{eq:pnn}). However, here we do not divide by the number of possible permutations of the arguments of $G_l$ but by the number of possible permutation of only that arguments that have the value one and only that arguments that have the value zero.
The first delta-function ensures that the total number of arguments of the correlation functions in each summand that have the value one equals $s$ and the second delta-function ensures that the total number of arguments of the correlation functions in each summand equals $N$.

Inserting the combinatorial factor $M^{n_{l,q}}_{l, k_{l,q}}$ given by Eq.~(\ref{eq:combinatorialfactor4}) into Eq.~(\ref{eq:pns1}) the product of the first factor in Eq.~(\ref{eq:combinatorialfactor4}) results in a factor $N!$ as most of the terms cancel.
Eventually we obtain from Eq.~(\ref{eq:combinatorialfactor4}) 
\begin{eqnarray}
	p_{N}(s) =N! \sum_{k_{1,0}}^{\infty} \Bigg[ \bigg(\frac{C_1}{N} \bigg)^{k_{1,0}} \frac{1}{k_{1,0}!} \Bigg] \times \sum_{k_{1,1}}^{\infty} \Bigg[ \bigg(1- \frac{C_1}{N} \bigg)^{k_{1,1}} \frac{1}{k_{1,1}!} \Bigg]
	\nonumber
	\\
	\times \prod_{l=2}^{l_{\text{max}}} \Bigg\{ \prod_{q=0}^{l} \Bigg[ \sum_{k_{l,q}=0}^{\infty} \bigg( (-1)^q \frac{C_l}{N^l} \bigg)^{k_{l,q}} \frac{1}{k_{l, q}!} \bigg( \frac{1}{q!} \frac{1}{(l-q)!} \bigg)^{k_{l,q}}\Bigg] \Bigg\}
	\nonumber
	\\
	\times \delta\bigg(s-\sum_{l=1}^{l_{\text{max}}}\sum_{q=0}^l (l-q)k_{l,q} \bigg) \delta\bigg(N -  \sum_{l=1}^{l_{\text{max}}}\sum_{q=0}^l l k_{l,q} \bigg). 
	\label{eq:pns2}
\end{eqnarray}
Evaluating the sum over $k_{1,1}$ taking into account the second delta-function we obtain
\begin{eqnarray}
	p_{N}(s) =\frac{N!}{(N-\sum_{l=2}^{l_{\text{max}}}\sum_{q=0}^l l k_{l,q}  - k_{1,0})!} \sum_{k_{1,0}}^{\infty} \Bigg[ \bigg(\frac{C_1}{N} \bigg)^{k_{1,0}} \frac{1}{k_{1,0}!} \Bigg] 
	\nonumber
	\\
	\times  \bigg(1- \frac{C_1}{N} \bigg)^{N-\sum_{l=2}^{l_{\text{max}}}\sum_{q=0}^l l k_{l,q} -k_{1,0}} 
	\nonumber
	\\
	\times \prod_{l=2}^{l_{\text{max}}} \Bigg\{ \prod_{q=0}^{l} \Bigg[ \sum_{k_{l,q}=0}^{\infty} \bigg( (-1)^q \frac{C_l}{N^l} \bigg)^{k_{l,q}} \frac{1}{k_{l, q}!} \bigg( \frac{1}{q!} \frac{1}{(l-q)!} \bigg)^{k_{l,q}}\Bigg] \Bigg\}
	\nonumber
	\\
	\times \delta\bigg(s-\sum_{l=1}^{l_{\text{max}}}\sum_{q=0}^l (l-q)k_{l,q} \bigg).
	\label{eq:pns3}
\end{eqnarray}
The first factor can be written as
\begin{eqnarray}
	\frac{N!}{(N-\sum_{l=2}^{l_{\text{max}}}\sum_{q=0}^l l k_{l,q}  - k_{1,0})!} = N^{\sum_{l=2}^{l_{\text{max}}}\sum_{q=0}^l l k_{l,q}  + k_{1,0}}[1+ R(N)],
	\label{eq:largeN}
\end{eqnarray}
where the remainder $R(N)$ goes to zero as $\propto 1/N$ for $N\rightarrow \infty$.
Inserting Eq.~(\ref{eq:largeN}) into Eq.~(\ref{eq:pns3}) almost all powers of $N$ cancel and we obtain
\begin{eqnarray}
	p_{N}(s) = [1+R(N)]\sum_{k_{1,0}}^{\infty} \Bigg[  \frac{C_1^{k_{1,0}}}{k_{1,0}!} \Bigg] 
	\bigg(1- \frac{C_1}{N} \bigg)^{N-\sum_{l=2}^{l_{\text{max}}}\sum_{q=0}^l l k_{l,q} -k_{1,0}} 
	\nonumber
	\\
	\times \prod_{l=2}^{l_{\text{max}}} \Bigg\{ \prod_{q=0}^{l} \Bigg[ \sum_{k_{l,q}=0}^{\infty} \bigg( (-1)^q C_l \bigg)^{k_{l,q}} \frac{1}{k_{l, q}!} \bigg( \frac{1}{q!} \frac{1}{(l-q)!} \bigg)^{k_{l,q}}\Bigg] \Bigg\}
	\nonumber
	\\
	\times \delta\bigg(s-\sum_{l=1}^{l_{\text{max}}}\sum_{q=0}^l (l-q)k_{l,q} \bigg).
	\label{eq:pns4}
\end{eqnarray}
Performing the limit $N\rightarrow \infty$ yields
\begin{eqnarray}
	p_{\infty}(s) = \sum_{k_{1,0}}^{\infty} \Bigg[  \frac{C_1^{k_{1,0}}}{k_{1,0}!} \Bigg] \exp(-C_1)
	\nonumber
	\\
	\times \prod_{l=2}^{l_{max}} \Bigg\{ \prod_{q=0}^{l} \Bigg[ \sum_{k_{l,q}=0}^{\infty} \bigg( (-1)^q C_l \bigg)^{k_{l,q}} \frac{1}{k_{l, q}!} \bigg( \frac{1}{q!} \frac{1}{(l-q)!} \bigg)^{k_{l,q}}\Bigg] \Bigg\}
	\nonumber
	\\
	\times \delta\bigg(s-\sum_{l=1}^{l_{\text{max}}}\sum_{q=0}^l (l-q)k_{l,q} \bigg).
	\label{eq:pns5}
\end{eqnarray}
Now, we perform the sum over $k_{1,0}$ taking the delta-function into account to obtain
\begin{eqnarray}
	p_{\infty}(s) = C_1^s \exp(-C_1)\frac{1}{(s-\sum_{l=2}^{l_{\text{max}}}\sum_{q=0}^l (l-q)k_{l,q} )!}
	\nonumber
	\\
	\times \prod_{l=2}^{l_{\text{max}}} \Bigg\{ \prod_{q=0}^{l} \Bigg[ \sum_{k_{l,q}=0}^{\infty} \bigg(\frac{(-1)^q C_l (C_1)^{q-l}}{q!(l-q)!}  \bigg)^{k_{l,q}} \frac{1}{k_{l, q}!}  \Bigg] \Bigg\}.
	\label{eq:pns6}
\end{eqnarray}
The first line is independent of $k_{l,l}$. Hence we can perform the sums over $k_{l,l}$ that result in exponential factors yielding
\begin{eqnarray}
	p_{\infty}(s) = C_1^s \exp\bigg(\sum_{l=1}^{l_{\text{max}}} (-1)^l \frac{C_l}{l!}\bigg)\frac{1}{(s-\sum_{l=2}^{l_{\text{max}}}\sum_{q=0}^{l-1} (l-q)k_{l,q} )!}
	\nonumber
	\\
	\times \prod_{l=2}^{l_{\text{max}}} \Bigg\{ \prod_{q=0}^{l-1} \Bigg[ \sum_{k_{l,q}=0}^{\infty} \bigg(\frac{(-1)^q C_l (C_1)^{q-l}}{q!(l-q)!}  \bigg)^{k_{l,q}} \frac{1}{k_{l, q}!}  \Bigg] \Bigg\}.
	\label{eq:pns7}
\end{eqnarray}
Next, we substitute $q$ by $t=l-q$ and calculate the characteristic function resulting in
\begin{eqnarray}
	\chi(u)=\sum_{s=0}^\infty p_{\infty}(s)\exp(ius) 
	\nonumber
	\\
	= \sum_{s=0}^\infty \exp(ius) C_1^s \exp\bigg(\sum_{l=1}^{l_{\text{max}}} (-1)^l \frac{C_l}{l!}\bigg)\frac{1}{(s-\sum_{l=2}^{l_{\text{max}}}\sum_{t=1}^{l} t k_{l,t}  )!}
	\nonumber
	\\
	\times \prod_{l=2}^{l_{\text{max}}} \Bigg\{ \prod_{t=1}^{l} \Bigg[ \sum_{k_{l,t}=0}^{\infty} \bigg(\frac{(-1)^{l-t} C_l (C_1)^{-t}}{t!(l-t)!}  \bigg)^{k_{l,t}} \frac{1}{k_{l, t}!}  \Bigg] \Bigg\}.
	\label{eq:characteristic_function}
\end{eqnarray}
We can evaluate the sum over $s$ using
\begin{eqnarray}
	\sum_{s=0}^\infty \frac{\bigg[C_1 \exp(iu)\bigg]^s}{(s-a)!}=\sum_{s=a}^\infty \frac{\bigg(C_1 \exp(iu)\bigg)^s}{(s-a)!}
	\nonumber
	\\
	=[C_1 \exp(iu)]^a \sum_{\tilde{s}=0}^\infty \frac{ \bigg(C_1 \exp(iu)\bigg) ^{\tilde{s}} }{\tilde{s}!}=[C_1 \exp(iu)]^a\exp\Bigg[C_1 \exp(iu) \Bigg], 
	\label{eq:sumovers}
\end{eqnarray}
where we used that $1/(s-a)!$ is considered to be zero if $s-a<0$ in the first equation.
Furthermore we used the substitution $\tilde{s}=s-a$. 
With $a=\sum_{l=2}^{l_{\text{max}}}\sum_{t=1}^{l} t k_{l,t}$ we obtain from Eq.~(\ref{eq:characteristic_function}) with Eq.~(\ref{eq:sumovers})
\begin{eqnarray}
	\chi(u)= \exp\Bigg[C_1 \exp(iu) \Bigg] \exp\bigg(\sum_{l=1}^{l_{\text{max}}} (-1)^l \frac{C_l}{l!}\bigg)
	\nonumber
	\\
	\times \prod_{l=2}^{l_{\text{max}}} \Bigg\{ \prod_{t=1}^{l} \Bigg[ \sum_{k_{l,t}=0}^{\infty} \bigg(\frac{(-1)^{l-t} C_l \exp(iul) }{t!(l-t)!}  \bigg)^{k_{l,t}} \frac{1}{k_{l, t}!}  \Bigg] \Bigg\}.
	\label{eq:characteristic_function2}
\end{eqnarray}
Eventually we can evaluate the remaining sums over $k_{l,t}$ which result in exponential functions. Collecting all terms we obtain Eq.~(\ref{eq:characteristicfunction}).
 \end{proof}
 It is remarkable that one could also evaluate the sums over $k_{l, l-1}$ directly in Eq.~(\ref{eq:pns7}).
 However, evaluating the remaining sums directly seems to be not doable.

 \section{Summary and Discussion\label{sec:discussion}}
 We calculated the probability distribution of the number of occurring events from a set of $N$ correlated events in the limit $N \rightarrow \infty$ under the assumption that the events are statistically indistinguishable and correlations are limited to an arbitrary order $l_{\text{max}}$.
 The limiting distribution is is given by Eq.~(\ref{eq:pns7}) that can be further simplified evaluating the sums over $k_{l,l-1}$, however, this expression contains still infinite sums. 
 Therefore it is not suitable for practical evaluations.
 We calculate the characteristic function of the limiting distribution, Eq.~(\ref{eq:characteristicfunction}), which has a surprisingly simple form containing only finite sums.
 Setting all correlation parameters $C_k=0$ for $k>1$ we recover the characteristic function of the Poisson distribution.




\end{document}